\newcommand{\plus}{{\cdot+\cdot}}
\newcommand{\zfc}{\mathnormal{\mathsf{ZFC}}}
\DeclareMathOperator{\supp}{supp}
\newtheorem{theorem}{Theorem}
\newtheorem{proposition}[theorem]{Proposition}
\newtheorem{lemma}[theorem]{Lemma}
\newtheorem{question}[theorem]{Question}
\theoremstyle{definition}
\author[D. Fern\'andez]{David J. Fern\'andez-Bret\'on}
\address{
Escuela Superior de F\'{\i}sica y Matem\'aticas\\
Instituto Polit\'ecnico Nacional\\
Av. Instituto Polit\'ecnico Nacional s/n Edificio 9, 
Col. San Pedro Zacatenco, Alcald\'{\i}a Gustavo A. Madero, 07738, CDMX, Mexico. 
}
\email{dfernandezb@ipn.mx}
\urladdr{https://dfernandezb.web.app}
\author[E. Sarmiento]{Eliseo Sarmiento Rosales}
\address{
Escuela Superior de F\'{\i}sica y Matem\'aticas\\
Instituto Polit\'ecnico Nacional\\
Av. Instituto Polit\'ecnico Nacional s/n Edificio 9, 
Col. San Pedro Zacatenco, Alcald\'{\i}a Gustavo A. Madero, 07738, CDMX, Mexico.
}
\email{esarmiento@ipn.mx}
\author[G. Vera]{Germ\'an Vera}
\address{
Escuela Superior de F\'{\i}sica y Matem\'aticas\\
Instituto Polit\'ecnico Nacional\\
Av. Instituto Polit\'ecnico Nacional s/n Edificio 9, 
Col. San Pedro Zacatenco, Alcald\'{\i}a Gustavo A. Madero, 07738, CDMX, Mexico. 
}
\email{gverame@ipn.mx}
\title[Owings-like theorems]{Owings-like theorems for infinitely many \\ colours or finite monochromatic sets}
\begin{document}

\begin{abstract}
Inspired by Owings's problem, we investigate whether, for a given an Abelian group $G$ and cardinal numbers $\kappa,\theta$,  every colouring $c:G\longrightarrow\theta$ yields a subset $X\subseteq G$ with $|X|=\kappa$ such that $X+X$ is monochromatic. (Owings's problem asks this for $G=\mathbb Z$, $\theta=2$ and $\kappa=\aleph_0$; this is known to be false for the same $G$ and $\kappa$ but $\theta=3$.) 
We completely settle the question for $\kappa$ and $\theta$ both finite (by obtaining sufficient and necessary conditions for a positive answer) and for $\kappa$ and $\theta$ both infinite (with a negative answer). Also, in the case where $\theta$ is infinite but $\kappa$ is finite, we obtain some sufficient conditions for a negative answer as well as an example with a positive answer.
\end{abstract}

\keywords{Owings's problem, Ramsey-type theorems, combinatorial set theory, Abelian group, additive combinatorics.}

\subjclass[2010]{Primary 03E05; secondary 05D10, 05E16.}

\maketitle

\section{Introduction}

Let $G$ be a commutative semigroup (additively denoted), and let $\kappa,\theta$ be two cardinal numbers. We use the following (flavour of Hungarian) notation: the symbol
\begin{equation*}
G\rightarrow(\kappa)_\theta^\plus
\end{equation*}
denotes the (Ramsey-theoretic) statement that, for every colouring $c:G\longrightarrow\theta$, there exists a subset $X\subseteq G$ with $|X|=\kappa$ and $X+X$ monochromatic, where we define
\begin{equation*}
X+X=\{x+y\big|x,y\in X\}=\{x+y\big|x,y\in X\text{ are distinct}\}\cup\{2x\big|x\in X\};
\end{equation*}
the notation $G\nrightarrow(\kappa)_\theta^\plus$ will of course denote the negation of $G\rightarrow(\kappa)_\theta^\plus$. An old 1974 problem of J. Owings~\cite{owings-problem} asks whether $\mathbb N\longrightarrow(\aleph_0)_2^\plus$. It is relatively easy to obtain a $3$-colouring witnessing that $\mathbb N\nrightarrow(\aleph_0)_3^\plus$; N. Hindman~\cite{hindman-sumsets-3colours} has obtained a stronger result where the relevant $3$-colouring has the property that one of the colour classes has density 0.\footnote{In fact, N. Hindman's construction from~\cite{hindman-sumsets-3colours} characterizes, in a sense, how {\it small} can a colour class of an arbitrary $3$-colouring be while witnessing the same negative statement.} Surprisingly, Owings's original problem remains open. It is worth noting that the analogous questions where one requires the monochromatic set to only contain elements of the form $2x$, or only elements $x+y$ for $x\neq y$, are easy to answer in the positive (the first one is the pigeonhole principle and the second one follows directly from Ramsey's theorem). So the main combinatorial difficulty in all Owings-like questions is the combination arising from simultaneously considering both kinds of elements in our desired monochromatic set.

Another recent line of inquiry considers Owings type questions for other infinite groups\footnote{In this paper, all groups will be Abelian and will be written in additive notation, i.e., $+$ denotes the group operation, $-x$ denotes the inverse of the element $x$, and $0$ denotes the identity element.}, especially the additive group of the real numbers $\mathbb R$. Hindman, Leader and Strauss~\cite{hindman-leader-strauss} showed that it is consistent to have a finite $\theta$ such that $\mathbb R\nrightarrow(\aleph_0)_\theta^\plus$ (more concretely, if $|\mathbb R|=\aleph_n$ then one can take $\theta=9\cdot 2^{4+n}$); on the other hand, Leader and Russell~\cite{leader-russell} proved that, if $V$ is any $\mathbb Q$-vector space of dimension at least $\beth_\omega$, then $V\rightarrow(\aleph_0)_\theta^\plus$ for every finite $\theta$. A natural question that arises after consideration of these results is whether one can consistently show a positive result for the real line. 
Such a result was established by Komj\'ath, Leader, Russell, Shelah, D. Soukup, and Vidny\'anszky~\cite{6-authors-monochromatic-sumsets}, who established that, if the existence of an $\omega_1$-Erd\H{o}s cardinal is consistent, then so is the statement that $\mathbb R\longrightarrow(\aleph_0)_\theta^\plus$ holds for every finite $\theta$. Later on, J. Zhang~\cite{zhang-sumsets} removed the need for using large cardinals in the previous result, showing that it is consistent to have $|\mathbb R|=\aleph_{\omega+1}$ and $\mathbb R\longrightarrow(\aleph_0)_\theta^\plus$ for every finite $\theta$; furthermore, in that paper the statement $\mathbb R\longrightarrow(\aleph_0)_2^\plus$ is shown to be a $\zfc$ theorem.

As can be seen from the previous summary, questions about the statement $G\longrightarrow(\kappa)_\theta^\plus$ have been studied extensively for infinite $\kappa$ and finite $\theta$; in this setting, some surprising results have been obtained, while at the same time many questions remain open. In this paper we set to study the remaining combinations of $\kappa$ and $\theta$. Section 2 deals with the case where both $\kappa$ and $\theta$ are assumed to be finite; we obtain a characterization of those groups $G$ for which $G\longrightarrow(\kappa)_\theta^\plus$ holds regardless of the (finite) values of $\kappa,\theta$. Although a big part of the proof of this characterization relies on known theorems, or on standard adaptation of known techniques, there is at least one case where new ideas were needed. In Section 3 we study the case where both $\kappa$ and $\theta$ are infinite, and obtain a negative answer for all groups. While this result is not surprising and the proof is not groundbreaking, we include the main ideas for completeness. Finally, in Section 4---the main section---we tackle the case where $\theta$ is infinite but $\kappa$ is finite; in this case we obtain a mixed bag (where the result is positive for some groups but negative for others). We were unable to obtain a complete characterization like the one in Section 2, but we do obtain examples of groups where the answer is positive and a class of groups where it is negative. Our results suggest that, in order to fully characterize the groups satisfying this Ramsey-theoretic property, some relevant information might be gathered from the cardinalities of the groups $\{x\in G\big|4x=0\}$ and/or $G/\{x\in G\big|2x=0\}$.

\section*{Acknowledgements}

 This work is part of the third author's PhD dissertation, conducted under the supervision of the remaining two authors, and supported with PhD scholarship 744225 from Conahcyt. The first author was also partially supported by internal grant SIP-20230355 and Conahcyt grant CBF2023-2024-334. The second author was supported by internal grant SIP-20232086.

\section{Finitely many colours, finite monochromatic sets}

The main result of this section is a characterization of those groups satisfying $G\rightarrow(n)_r^\plus$ for all finite $n$ and $r$. We will be terse in our presentation in order to avoid excessive detail in routine calculations.

The first easy observation one can make is that Boolean groups $G$ must satisfy $G\nrightarrow(2)_2^\plus$, as can be seen simply by giving one colour to $0$ and the other to the remaining elements of $G$, since $2x=2y=0$ and $x+y\neq 0$ whenever $x,y$ are distinct elements of such a $G$. Our characterization is closely related to this observation.

\begin{theorem}\label{thm:sect2}
    Let $G$ be an Abelian group. Define $G_2=\{g\in G\big|2g=0\}$ and $2G=\{2g\big|g\in G\}$. Then, the following conditions are equivalent:
    \begin{enumerate}
        \item $G\rightarrow(n)_r^\plus$ for all finite $n,r$,
        \item $G/G_2$ is infinite,
        \item $2G$ is infinite.
    \end{enumerate}
\end{theorem}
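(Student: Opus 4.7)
The equivalence of $(2)$ and $(3)$ is immediate: the doubling map $g \mapsto 2g$ is a homomorphism with kernel $G_2$ and image $2G$, so the first isomorphism theorem gives $G/G_2 \cong 2G$, and the two are finite or infinite together.

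For $(1) \Rightarrow (3)$, I argue the contrapositive by producing an explicit finite colouring. Suppose $2G$ is finite, and define $c \colon G \to 2G \cup \{\ast\}$ by setting $c(g) = g$ for $g \in 2G$ and $c(g) = \ast$ otherwise; this uses $|2G|+1$ colours. If $X+X$ were monochromatic of colour $\ast$, then any $x \in X$ would give $2x \in (X+X) \cap 2G$ with $c(2x) = 2x \neq \ast$, a contradiction. If instead $X+X$ is monochromatic of colour $\sigma \in 2G$, then fixing any $y_0 \in X$ yields $x + y_0 = \sigma$ for every $x \in X$, so $x = \sigma - y_0$ is uniquely determined and $|X| \leq 1$. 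Hence $G \nrightarrow (2)_{|2G|+1}^\plus$, refuting $(1)$.

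The bulk of the proof lies in $(3) \Rightarrow (1)$. Assume $2G$ is infinite. My plan is to locate inside $G$ a subgroup $H$ to which a classical Ramsey-theoretic theorem applies; since any colouring of $G$ restricts to one of $H$, this yields $G \rightarrow (n)_r^\plus$. By the structure theory of infinite abelian groups, $G$ must contain at least one of: (a) a copy of $\mathbb{Z}$; (b) a Pr\"ufer group $\mathbb{Z}(p^\infty)$ for some prime $p$; (c) a subgroup isomorphic to $\bigoplus_{\aleph_0} \mathbb{Z}/p^k$ for some prime $p$ and integer $k \geq 1$ with $p$ odd or $k \geq 2$; or (d) cyclic subgroups of order $p^k$ of arbitrarily large $k$ for some prime $p$. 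In cases (a), (b), (d), and (c) with $p$ odd, the argument reduces to a standard application of Van der Waerden's theorem---inside a sufficiently large cyclic $\mathbb{Z}/p^k$, or inside the even subgroup $2H$ when $2$ is not invertible---or of Gallai--Witt's theorem in the ambient vector space or module, producing a monochromatic arithmetic progression or homothetic copy of $X_0 + X_0$ whose starting point $v$ lies in $2H$; then $X = v/2 + \lambda X_0$ (for an appropriate scaling $\lambda$) satisfies $X + X$ equals this monochromatic set.

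The hard case is (c) with $p = 2$ and $k \geq 2$, i.e.\ $H \cong \bigoplus_{\aleph_0} \mathbb{Z}/2^k$, where $2$ is not invertible on $H$ and $H$ admits no arbitrarily long arithmetic progressions. This is, I suspect, where the new ideas alluded to in the introduction are required. The plan would be to adapt Hales--Jewett or Gallai over the alphabet $\mathbb{Z}/2^k$ under the additional constraint that the monochromatic combinatorial line starts at a point of $2H$, so that the halving step can be carried out inside $H$. Securing this extra control---perhaps by first running a Ramsey argument on the sub-alphabet $\{0, 2, 4, \ldots, 2^k - 2\}$ of doubled values and then lifting back to $H$---while preserving the monochromatic guarantee will be the principal obstacle.
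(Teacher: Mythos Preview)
Your arguments for $(2)\Leftrightarrow(3)$ and $(1)\Rightarrow(3)$ are correct and essentially coincide with the paper's.

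For $(3)\Rightarrow(1)$ there are two genuine gaps. First, your structure dichotomy is false as stated. Take $G=\bigoplus_{i\in\mathbb N}\mathbb Z/p_i\mathbb Z$ for pairwise distinct odd primes $p_i$. Then $2G=G$ is infinite, yet $G$ contains no copy of $\mathbb Z$ (it is torsion), no Pr\"ufer group (every element has squarefree order), no $\bigoplus_{\aleph_0}\mathbb Z/p^k\mathbb Z$ for fixed $p,k$ (each $p$-component is cyclic of prime order or trivial), and no cyclic subgroup of order $p^k$ with $k\geq 2$ for any prime $p$. So none of your cases (a)--(d) applies. This particular $G$ is easy to handle (for given $n,r$ pick one $p_i$ exceeding the van der Waerden number $W(2n-1,r)$ and run your AP argument inside $\mathbb Z/p_i\mathbb Z$), but it shows the case split needs repair before the rest of the outline can be trusted. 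Second, your hard case---$H\cong\bigoplus_{\aleph_0}\mathbb Z/2^k\mathbb Z$ with $k\geq 2$---is left as a plan. The proposed ``Hales--Jewett on the doubled sub-alphabet, then lift'' does not obviously work: a combinatorial line in the alphabet $\{0,2,\ldots,2^k-2\}$ gives a monochromatic set of the form $a+\lambda v$ with $a\in 2H$, but you need the \emph{sumset} $X+X$, not a line, and halving $a$ while keeping the line structure intact does not by itself produce such an $X$.

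The paper takes a rather different route, avoiding van der Waerden and Gallai--Witt entirely and using only Ramsey's theorem on tuples. Its case split is on $2G$ rather than on $G$: either $2G$ contains an element of infinite order (reduce to $\mathbb N$), or $2G$ is torsion with finitely many elements of order~$2$ (build a sequence with controlled $\{1,2,4\}$-combinations and run the Leader--Russell pattern argument), or $2G$ has infinitely many elements of order~$2$. The last case is the one corresponding to your $\bigoplus\mathbb Z/2^k\mathbb Z$, and the key idea is this: choose $z_m\in G$ with $g_m:=2z_m$ independent of order~$2$, pick $2n$ indices $i_1,\ldots,i_n,j_1,\ldots,j_n$, and for each $f\colon\{1,\ldots,n\}\to\{0,1\}$ set
\[
x_f=\sum_{k=1}^n\bigl(\varepsilon_{f(k)}z_{i_k}+\delta_{f(k)}z_{j_k}\bigr),\qquad (\varepsilon_0,\delta_0)=(3,0),\ (\varepsilon_1,\delta_1)=(1,2).
\]
Then $2x_f=g_{i_1}+\cdots+g_{i_n}$ is \emph{independent of $f$}, while $x_f+x_g=g_{t_1}+\cdots+g_{t_n}$ with each $t_k\in\{i_k,j_k\}$. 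A single application of Ramsey's theorem to the colouring $(k_1,\ldots,k_n)\mapsto c(g_{k_1}+\cdots+g_{k_n})$ makes all of these the same colour, giving $|X|=2^n$ with $X+X$ monochromatic. This bypasses the halving obstruction altogether.
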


In order to present the proof, we will utilize three preliminary lemmas, the proofs of which are mostly routine but we include at least the main ideas of each for the convenience of the reader.

\begin{lemma}\label{lem:order4}
Let $G$ be an abelian group such that $G_2=\{g\in G\big|2g=0\}$ is finite of cardinality $n$. Then, for any $c,d\in G$, the equation $2x=c$ has at most $n$ solutions, and the equation $4y=d$ has at most $n^2$ solutions (in particular, the number of elements of order 4 in $G$ is at most $n^2$, as they are contained within the solutions of the equation $4x=0$). 
\end{lemma}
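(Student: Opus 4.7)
The plan is to use the familiar fact that, in an Abelian group, solution sets of linear equations are cosets of the relevant kernel.

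First I would treat the equation $2x=c$. If there is no solution there is nothing to prove, so pick any particular solution $x_0$; then for an arbitrary solution $x$ we have $2(x-x_0)=2x-2x_0=c-c=0$, so $x-x_0\in G_2$. Thus the full solution set is contained in the coset $x_0+G_2$, of cardinality $|G_2|=n$.

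For the equation $4y=d$, the natural move is to factor it through the doubling map. Let $S=\{y\in G\,|\,4y=d\}$ and consider the map $\varphi\colon S\longrightarrow G$ given by $\varphi(y)=2y$. If $y\in S$ then $2(\varphi(y))=4y=d$, so $\varphi(y)$ lies in the solution set of $2z=d$, which by the first part has size at most $n$. Moreover, for a fixed $z$ in the image of $\varphi$, the fiber $\varphi^{-1}(z)=\{y\in G\,|\,2y=z\}$ again has size at most $n$ by the first part applied to $c=z$. Therefore $|S|\le n\cdot n=n^2$. The parenthetical remark about elements of order $4$ is then immediate: every element of order $4$ satisfies $4x=0$, and applying the result with $d=0$ shows that there are at most $n^2$ such solutions in total.

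I do not anticipate any real obstacle here; the argument is essentially just ``kernel + coset'' applied twice, and the two-step factorization $4=2\cdot 2$ is the only piece of structure needed beyond the abelian-group axioms.
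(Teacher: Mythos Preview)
Your argument is correct and essentially identical to the paper's: both handle $2x=c$ by noting that any two solutions differ by an element of $G_2$, and both handle $4y=d$ by factoring through the doubling map (the paper observes that $c_0=2y_0$ solves $2x=d$ and then counts solutions of $2y=c_0$, which is exactly your fiber computation for $\varphi(y)=2y$). There is nothing to add.
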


\begin{proof}
Given a $c\in G$, either the equation $2x=c$ has no solutions, or admits at least one solution $x_0$. A routine calculation shows that any other solution must differ from $x_0$ by an element of order two, and so there are at most $n$ solutions. On the other hand, given a $d\in G$ notice that any solution $y_0$ to the equation $4y=d$ is also a solution of the equation $2x=c_0$, where $c_0=2y_0$ is a solution of $2x=d$. Hence there are at most $n$ possible values for $c_0$, and for each of these there are at most $n$ possible values for $y_0$, yielding at most $n^2$ possible solutions overall for the equation $4y=d$.
\end{proof}

\begin{lemma}\label{lem:uniqueness}
Let $G$ be an infinite group with only finitely many elements of order $2$. Then, there exists a sequence of elements of $G$, $(g_n\big|n\in\mathbb N)$, with the property that, whenever there is an equality
\begin{equation*}
\sum_{i=1}^n\varepsilon_i g_{k_i}=\sum_{j=1}^m\delta_j g_{l_j}
\end{equation*}
where $k_1<\cdots<k_n$, $l_1<\cdots<l_m$, and $\varepsilon_i,\delta_j\in\{1,2,4\}$, then it must be the case that $n=m$, $k_i=l_i$ for all $i\in\{1,\ldots,n\}$, and $\varepsilon_i=\delta_i$ for all $i<n$.
\end{lemma}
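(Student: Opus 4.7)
The plan is to construct the sequence $(g_n)_{n \in \mathbb{N}}$ by recursion on $n$, choosing each $g_n$ so that adding it to the list introduces no new ``bad'' relation. The book-keeping device I would use is to rewrite any prospective lemma-style equality $\sum_{i=1}^n \varepsilon_i g_{k_i} = \sum_{j=1}^m \delta_j g_{l_j}$ as a single linear relation $\sum_p c_p g_p = 0$ over the combined index set $\{k_i\} \cup \{l_j\}$, where each $c_p \in \{-4, -3, \dots, 3, 4\}$ arises as either $\varepsilon_i$, or $-\delta_j$, or the difference $\varepsilon_i - \delta_j$, depending on whether the index $p$ appears only on the left, only on the right, or on both sides. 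The conclusion of the lemma fails precisely when the resulting relation has some nonzero $c_p$ at an index $p$ strictly below the maximum, or when the coefficient at the maximum has absolute value $4$ (which would force the maximum index to appear on only one of the two sums).

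Assuming $g_0, \dots, g_{n-1}$ have been chosen and satisfy the uniqueness property for shorter sums, I would define the bad set $B_n \subseteq G$ to consist of every $g$ for which there exists a tuple $(c, c_0, \dots, c_{n-1}) \in \{-4, \dots, 4\}^{n+1}$, with $c \neq 0$ and either $(c_0, \dots, c_{n-1}) \neq (0, \dots, 0)$ or $|c| = 4$, such that $c g = -\sum_{p < n} c_p g_p$. There are only finitely many such tuples, and for each one Lemma \ref{lem:order4} bounds the number of solutions: equations of the form $2x = d$ admit at most $|G_2|$ solutions and $4y = d$ at most $|G_2|^2$, while $\pm x = d$ has a unique solution. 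Once $B_n$ has been shown to be finite, I would pick any $g_n \in G \setminus B_n$ to extend the sequence and maintain the invariant.

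The main obstacle I foresee is the treatment of the coefficient $c = \pm 3$, which arises as the difference $4 - 1$ and is hence unavoidable within the allowed coefficient set $\{1, 2, 4\}$. Lemma \ref{lem:order4} says nothing about the equation $3x = d$, whose solution set is either empty or a coset of the $3$-torsion subgroup of $G$, and this subgroup is not directly controlled by the hypothesis on $|G_2|$. To surmount this I would invoke the infinitude of $G$ more structurally, demanding of $g_n$ not only that it evade the multiplication-by-small-integer preimages of the finite target list $\{-\sum_{p < n} c_p g_p\}$, but also that $3 g_n$ itself not land in that list; this should be arrangeable by choosing $g_n$ from a suitable infinite subquotient of $G$, or by a more refined bookkeeping that exploits the Lemma \ref{lem:order4} bounds on the subgroup of elements of order dividing $4$.

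With the inductive construction complete, the verification is immediate: given any lemma-style equality among the constructed $g_n$, its associated relation $\sum_p c_p g_p = 0$ must, by the invariant maintained at the stage when the maximum index was first added, have $c_p = 0$ at every non-maximum position and $|c_{\max}| \leq 3$. Translating these constraints back into the original pair of sums yields that they have the same length $n = m$, the same indices $k_i = l_i$ for all $i \in \{1, \dots, n\}$, and matching coefficients $\varepsilon_i = \delta_i$ for every $i < n$, which is precisely the conclusion of the lemma.
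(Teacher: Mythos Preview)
Your overall strategy---recursively choosing $g_n$ outside a finite ``bad set'' of solutions controlled by Lemma~\ref{lem:order4}---is essentially the paper's approach. The paper first splits into cases: if some $a\in G$ has infinite order it simply sets $g_n=na$; otherwise $G$ is torsion, each $H_k=\langle g_0,\ldots,g_{k-1}\rangle$ is finite, and one picks $g_n$ with $\{g_n,2g_n,4g_n\}\cap H_k=\varnothing$, then argues that any offending equation, after cancelling equal terms and isolating the highest surviving index, forces one of $g_n,2g_n,4g_n$ into $H_k$. Your single unified recursion is a minor reorganisation of the same idea.

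The obstacle you flag at $c=\pm 3$ is real, and your suggested patch---also forbidding $3g_n$ from hitting the finite target list, or passing to a ``suitable infinite subquotient''---cannot close it. Take $G=\bigoplus_{n<\omega}\mathbb Z/3\mathbb Z$: here $G_2=\{0\}$, so the hypothesis is satisfied, yet $3g=0$ identically, and for \emph{any} choice of $g_0,g_1$ one has $1\cdot g_0+4\cdot g_1=4\cdot g_0+1\cdot g_1$, violating the conclusion with $n=m=2$, $k_i=l_i$, but $\varepsilon_1=1\neq 4=\delta_1$. (The paper's infinite-order sequence $g_n=na$ is likewise defective: $4g_1+1g_2=6a=2g_3$ gives $n=2\neq 1=m$.) So the lemma as literally stated is false, and neither your refined bookkeeping nor the paper's sketch can be completed as written; what the application in Theorem~\ref{thm:sect2} actually requires is a weaker uniqueness tailored to the specific patterns $\vec{\varepsilon_i}$ used there.
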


\begin{proof}
If there is an $a\in G$ of infinite order, then the sequence $g_n=na$ satisfies the given condition. So we assume that $G$ is a torsion group with finitely many elements of order $2$. Then there are also finitely many elements of order $4$ by Lemma~\ref{lem:order4}. Let $g_0\in G$ be any element such that $o(g_0)\notin\{1,2,4\}$. Recursively pick $g_n\in G\setminus H_k$, where $H_k=\langle g_i\mid i<k\rangle$ is the subgroup generated by the $g_i$ ($i<k$), such that $g_n$ is not a solution to any equations $2x=c$, $4x=c$ for $c\in H_k$ (this can be done by Lemma~\ref{lem:order4} and the fact that $H_k$ is finite). In other words, $\{g_n,2g_n,4g_n\}\cap H_k=\varnothing$. This defines the sequence $(g_n \mid n \in \mathbb{N})$. The desired conclusion stems from the fact that, given any equation
\begin{equation*}
\sum_{i=1}^n \varepsilon_i g_{k_i} = \sum_{j=1}^m \delta_j g_{l_j}
\end{equation*}
as in the statement of the lemma, one can always cancel equal terms on both sides and, should any terms still survive, move the term with highest index to one side of the equation and the remaining terms to the other side. This way we get an equation where one of $g_{n}, 2g_{n}, 4g_{n}$ equals some combination of the previous $g_k$, contradicting the choice of $g_n$.
\end{proof}

In order to handle expressions like the one considered in the previous paragraph, we introduce the following notation: given a finite sequence of integers $\vec{\varepsilon}=(\varepsilon_1,\ldots,\varepsilon_m)$, and a finite sequence of group elements $\vec{g}=(g_1,\ldots,g_n)$, the notation $\vec{\varepsilon}*\vec{g}$ will denote the element
\begin{equation}
\vec{\varepsilon}*\vec{g}=\sum_{i=1}^{\min\{n,m\}}\varepsilon_i g_i.
\end{equation}

\begin{lemma}\label{lem:secuencia-orden-2-4}
    Let $G$ be a group such that $2G=\{2g\big|g\in G\}$ has infinitely many elements of order $2$. Define $\varepsilon_0=3,\varepsilon_1=1,\delta_0=0,\delta_1=2$ and, for a function $h:\{1,\ldots,n\}\longrightarrow\{0,1\}$, denote $\vec{\varepsilon_h}=(\varepsilon_{h(1)},\ldots,\varepsilon_{h(n)})$, and similarly for $\vec{\delta_h}$. Then, there exists a sequence of elements $\vec{z}=(z_n\big|n\in\mathbb N)$ such that, for any choice of distinct indices $i_1,\ldots,i_n,j_1,\ldots,j_n\in\mathbb N$, and for any two functions $f,g:\{1,\ldots,n\}\longrightarrow\{0,1\}$, we have that $\vec{\varepsilon_f}*(z_{i_1},\ldots,z_{i_n})+\vec{\delta_f}*(z_{j_1},\ldots,z_{j_n})=\vec{\varepsilon_g}*(z_{i_1},\ldots,z_{i_n})+\vec{\delta_g}*(z_{j_1},\ldots,z_{j_n})$ if and only if $f=g$.
\end{lemma}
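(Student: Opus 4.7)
My plan is to reduce the equation in the conclusion to a single linear relation in an $\mathbb{F}_2$-vector space. After subtracting both sides and using the fact that $\varepsilon_0-\varepsilon_1=2$ while $\delta_0-\delta_1=-2$, all coefficients collapse to $0$ or $\pm 2$, so letting $K:=\{k : f(k)\neq g(k)\}$ I expect the condition to simplify to $\sum_{k\in K}\pm(2z_{i_k}-2z_{j_k})=0$, with the sign depending only on whether $f(k)$ equals $0$ or $1$. The strategy is to choose the $z_n$ so that the elements $y_n:=2z_n$ have order $2$ and are $\mathbb{F}_2$-linearly independent; then each summand $2z_{i_k}-2z_{j_k}$ becomes $y_{i_k}+y_{j_k}$ (since $-y=y$ for an order-$2$ element), making signs irrelevant. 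The relation then reduces to a sum of distinct basis vectors of an $\mathbb{F}_2$-vector space, which vanishes only if $K=\varnothing$, i.e.\ $f=g$.

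For the construction, I would use the hypothesis as follows. The set $H:=2G\cap G_2$ is a subgroup of $G_2$ which, by the assumption that $2G$ contains infinitely many elements of order $2$, is infinite. Since $H$ has exponent $2$ it is a vector space over $\mathbb{F}_2$, and being infinite it admits an infinite linearly independent subset $\{y_n : n\in\mathbb{N}\}$, selected recursively by picking $y_n$ outside the finite subgroup generated by $y_0,\dots,y_{n-1}$. Because each $y_n\in 2G$, I can fix for every $n$ an element $z_n\in G$ with $2z_n=y_n$; this gives the desired sequence.

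The verification then proceeds as sketched in the first paragraph. The ``if'' direction is immediate. For ``only if'', I assume the displayed equation holds and, via the coefficient computation above, rewrite it as $\sum_{k\in K}\pm(y_{i_k}-y_{j_k})=0$, which because each $y_m$ has order $2$ simplifies to $\sum_{k\in K}(y_{i_k}+y_{j_k})=0$. Since the $2n$ indices $i_1,\dots,i_n,j_1,\dots,j_n$ are pairwise distinct, this is a sum of $2|K|$ distinct elements from the independent family $\{y_n\}_{n\in\mathbb{N}}$, so $\mathbb{F}_2$-linear independence forces $K=\varnothing$, hence $f=g$.

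I do not anticipate a serious obstacle: the only real insight is to recognise that the specific coefficients $\varepsilon_0=3,\varepsilon_1=1,\delta_0=0,\delta_1=2$ have been engineered precisely so that every pairwise difference lies in $\{0,\pm 2\}$. This funnels the problem into the $2$-torsion of $2G$, which is exactly where the hypothesis hands us an infinite independent family, so the combinatorial verification becomes routine $\mathbb{F}_2$-linear algebra.
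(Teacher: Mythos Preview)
Your proposal is correct and follows essentially the same approach as the paper: both construct the sequence by choosing $\mathbb{F}_2$-independent order-$2$ elements $y_n=g_n\in 2G$ and then lifting to $z_n$ with $2z_n=y_n$, and both verify the conclusion by reducing the displayed equation to a vanishing $\mathbb{F}_2$-combination of the $y_n$. The only cosmetic difference is that the paper packages the verification as an induction on $n$ (cancelling coordinates where $f(k)=g(k)$ one at a time), whereas you subtract once and invoke linear independence directly; the underlying computation is the same.
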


\begin{proof}
     First pick elements $g_0,g_1,\ldots\in 2G$ by recursion in such a way that each $g_n$ has order 2 and does not belong to the subgroup generated by the previous $g_k$, $k<n$. Now for each $n$ let $z_n$ be such that $2z_n=g_n$.
     
    We now prove that the $z_n$ are as required. In the nontrivial direction, proceed by induction on $n$. For $n=1$, the only way (up to symmetry) we can have two distinct functions $f,g:\{1\}\longrightarrow\{0,1\}$ is by having $f(1)=0$ and $g(1)=1$. Then the equation $\vec{\varepsilon_f}*(z_{i_1})+\vec{\delta_f}*(z_{j_1})=\vec{\varepsilon_g}*(z_{i_1})+\vec{\delta_g}*(z_{j_1})$ amounts to $3z_{i_1}+0z_{j_1}=1z_{i_1}+2z_{j_1}$, which readily implies $g_{i_1}=2z_{i_1}=2z_{j_1}=g_{j_1}$, contradicting our choice of $g_k$. Now suppose that the result is true for each $k<n$, with $n\geq 2$. Suppose we have pairwise distinct indices $i_1,\ldots,i_n,j_1,\ldots,j_n$ and two functions $f,g:\{1,\ldots,n\}\longrightarrow\{0,1\}$ such that $\vec{\varepsilon_f}*\vec{x}+\vec{\delta_f}*\vec{y}=\vec{\varepsilon_g}*\vec{x}+\vec{\delta_g}*\vec{y}$. If there is at least one $k\in\{1,\ldots,n\}$ such that $f(k)=g(k)$, then we can cancel out the terms $\varepsilon_{f(k)}z_{i_k}+\delta_{f(k)}z_{j_k}$ from both sides of the equation, and use the inductive hypothesis to conclude that $f=g$. So we may assume that $f(k)\neq g(k)$ for all $k\in\{1,\ldots,n\}$. This means that, for each $k$, one of the sides of the equation contains the term $3z_{i_k}$, and the other side contains the terms $z_{i_k}+2z_{j_k}$. Hence, by simply subtracting $z_{i_1}+\cdots+z_{i_n}$ to both sides of the equation, we obtain an equation of the form
    \begin{equation*}
    \sum_{k\in F}2z_{i_k}+\sum_{i\in G}2z_{j_k}
    =\sum_{k\in F}2z_{j_k}+\sum_{i\in G}2z_{i_k},
    \end{equation*}
    where $F,G$ are two disjoint sets such that $F\cup G=\{1,\ldots,n\}$. Since each $g_k=2z_k$, we have obtained an equation where the $g_k$ with largest index can be moved to one side of the equation, and all remaining terms to the other side. But then some $g_n$ equals a combination of the previous $g_k$, a contradiction.
\end{proof}

We are now ready for the proof of the main result of this section.

\begin{proof}[Proof of Theorem~\ref{thm:sect2}]
    Note that points (2) and (3) are easily seen to be equivalent by considering the group morphism $g\longmapsto 2g$. This morphism has image $2G$ and kernel $G_2$, so by the first isomorphism theorem there is an isomorphism between $G/G_2$ and $2G$; in particular, these two sets are equipotent.
    
    Let us now prove that (1) implies (3). Suppose $2G$ is finite, say $2G=\{h_0,\ldots,h_{k-1}\}$. Define $c:G\longrightarrow k+1$ by
    \begin{equation*}
        c(g)=\begin{cases}
            i,\ \text{ if }g=h_i; \\
            k,\ \text{ if }g\notin H
        \end{cases}
    \end{equation*}
    If there were two distinct $x,y\in G$ such that $\{x,y\}+\{x,y\}=\{2x,2y,x+y\}$ was monochromatic, we would need to have $2x=2y$ since $2x,2y\in 2G$ and $c$ is injective in $2G$. This implies that $x+y\in 2G$ also, and $2x=2y=x+y$, hence $x=y$, a contradiction.

    The rest of the proof, devoted to show that (3) implies (1), splits into three cases, which we present in increasing order of difficulty (the first two cases are fairly routine).

    The first case is when the infinite group $2G$ contains an element of infinite order $g$. In this case, $g$ generates a subsemigroup isomorphic to $\mathbb N$. Since the statement $\mathbb N\longrightarrow(n)_r^\plus$ holds for all finite $n,r$ (for a proof see e.g. Hindman~\cite[Theorem 2.1]{hindman-sumsets-3colours}, although Hindman himself attributes this result to R. Rado and W. Deuber), it follows that $G\rightarrow(n)_r^\plus$.

    The second case is when the (infinite) group $2G$ is a torsion group, and contains only finitely many elements of order $2$. In this case, we use Lemma~\ref{lem:uniqueness} to obtain a sequence of elements $g_n$ such that each $g\in G$ of the form $\sum_{i=1}^n\varepsilon_i g_{k_i}$, with $k_1 <\cdots<k_n$ and $\varepsilon_i\in\{1,2,4\}$, can be represented uniquely as such an expression on the $g_n$. Having this sequence in hand, one just needs to follow the main idea from the main theorem (Theorem 2.2) in~\cite{leader-russell}: consider the patterns given by $\vec{\varepsilon_i}=(\underbrace{4,\ldots,4}_{r-i\text{ times}},\underbrace{2,\ldots,2}_{2i\text{ times}})$ for $i\in(r+1)$,
    define a new colouring $d$ of $2r$-tuples of natural numbers by
    \begin{equation*}
    d(n_1,\ldots,n_{2r})=(c(\vec{\varepsilon_0}*(g_{n_1},\ldots,g_{n_{2r}})),c(\vec{\varepsilon_1}*(g_{n_1},\ldots,g_{n_{2r}})),\ldots,c(\vec{\varepsilon_r}*(g_{n_1},\ldots,g_{n_{2r}}))),
    \end{equation*}
    use Ramsey's theorem~\cite{ramsey} to get an infinite $Y\subseteq\mathbb N$ such that $[Y]^{2r}$ is $d$-monochromatic, say on colour $(t_0,\ldots,t_r)$, and pigeonhole the $t$ to get two patterns, $t_i$ and $t_j$, of the same colour. Then, defining
    \begin{equation*}
    x_k=2g_{a_1}+\cdots+2g_{a_{r-j}}+2g_{b_{(k-1)(j-i)+1}}+\cdots+2g_{b_{k(j-i)}}+g_{c_1}+\cdots+g_{c_{2i}},
    \end{equation*}
    where the $b_k$ vary and lie between the $a_k$ and the $c_k$, one obtains the set $X=\{x_k\big|k\in\{1,\ldots,n\}\}$ satisfying that $X+X$ is $c$-monochromatic. The details are best left to the reader.

    Finally, in the case where $2G$ contains infinitely many elements of order $2$, we take a sequence $(z_n\big|n\in\mathbb N)$ and define $\varepsilon_0,\delta_0,\varepsilon_1,\delta_1$ as in Lemma~\ref{lem:secuencia-orden-2-4}. Given a finite colouring $c$ we define a colouring $d$ on $n$-tuples of natural numbers by letting $d(k_1,\ldots,k_n)=c(2z_{k_1}+\cdots+2z_{k_n})$. Ramsey's theorem provides us with an infinite $Y\subseteq\mathbb N$ such that $[Y]^n$ is $d$-monochromatic, say on colour $l$. Pick $2n$ distinct indices $i_1,\ldots,i_n,j_1,\ldots,j_n\in Y$ and, for each $f:\{1,\ldots,n\}\longrightarrow\{0,1\}$, define $x_f=\vec{\varepsilon_f}*(z_{i_1},\ldots,z_{j_n})+\delta_f*(z_{j_1},\ldots,z_{j_n})$. The choice of $z_n$ ensures that the $x_f$ are mutually distinct; moreover, for two distinct $f,g:\{1,\ldots,n\}\longrightarrow\{0,1\}$, we have
    \begin{eqnarray*}
    2x_f=2z_{i_1}+\cdots+2z_{i_n}, \\
    x_f+x_g=2z_{t_1}+\cdots+2z_{t_n},
    \end{eqnarray*}
    where $t_k=i_k$ if $f(k)=g(k)$, and $t_k=j_k$ if $f(k)\neq g(k)$. This implies $c(2x_f)=c(x_f+x_g)=l$, so $X=\{x_f\big|f:\{1,\ldots,n\}\longrightarrow\{0,1\}\}$ contains $2^n$ elements and satisfies that $X+X$ is $c$-monochromatic.    
\end{proof}

The idea used in the last case of the previous proof will be used again in Proposition~\ref{prop:order-4}.

\section{Infinitely many colours, no infinite monochromatic set}

The main result of this section is that $G \nrightarrow (\aleph_0)_{\aleph_0}^\plus$ for all abelian groups $G$. Although most of the proof involves routine ideas, we include it here for the convenience of the reader.

 We begin by recalling the useful fact that every Abelian group can be embedded in a direct sum $\bigoplus_{\alpha<\kappa}G_\alpha$, where each $G_\alpha$ is a copy of either $\mathbb Q$, or a Pr\"ufer group $\mathbb Z[p^\infty]$ for some prime number $p$ (for details see e.g.~\cite[Th. 9.23 and 9.14]{rotman}); furthermore if $G$ is uncountable then the index set $\kappa$ of the direct sum can be assumed to be $|G|$. Hence each element of a group can be thought of as a member of some such direct sum; in order to uniformize notation and not worry about the precise nature of the summand $G_\alpha$, we will use the symbol\footnote{Note that $\mathbb G$ is a countable subgroup of the 1-dimensional torus $\mathbb T=\mathbb R/\mathbb Z$, in which each $\mathbb Z[p^\infty]$, as well as $\mathbb Q$, can be embedded (the Pr\"ufer groups $\mathbb Z[p^\infty]$ may in fact be {\it defined} as the subgroup of $\mathbb T/\mathbb Z$ generated by the equivalence classes of $\frac{1}{p^n}$, where $n$ ranges over all natural numbers, whereas for $\mathbb Q$ it suffices to consider $q\longmapsto q\sqrt{2}$).} $\mathbb G=\mathbb Q[\sqrt{2}]/\mathbb Z$ and hence think of each uncountable (Abelian) group $G$ as a subgroup of $\bigoplus_{\alpha<\kappa}\mathbb G$, where $\kappa=|G|$. Given an element $g=(g_\alpha\big|\alpha<\kappa)\in G$, we define the {\it support} of $g$ by $\supp(g)=\{\alpha<\kappa\big|g_\alpha\neq 0\}$. With these tools, we now prove the main result of the section.
 
\begin{theorem}\label{thm:no-infinite}
For every group $G$, there exists a colouring $c:G\longrightarrow C$, with $C$ countably infinite, such that for every infinite subset $X\subseteq G$ the set $X+X$ is not monochromatic.
\end{theorem}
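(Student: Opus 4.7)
The plan is to use the embedding $G \hookrightarrow \bigoplus_{\alpha<\kappa}\mathbb{G}$ recalled just before the theorem and colour each $g \in G$ by its \emph{type} $c(g) = (|\supp(g)|, (g(\alpha_1), \ldots, g(\alpha_{|\supp(g)|})))$, where $\alpha_1 < \cdots$ enumerate $\supp(g)$ in increasing order. Since $\mathbb{G}$ is countable, this takes values in the countably infinite set $\bigcup_{n \in \mathbb{N}} \{n\} \times \mathbb{G}^n$, so it is a legitimate colouring for the theorem's purposes.

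To prove the property, I would argue by contradiction: assume some infinite $X \subseteq G$ has $X + X$ monochromatic of type $(n, (v_1, \ldots, v_n))$. A series of standard refinements should produce an infinite $X' \subseteq X$ with very controlled structure. First, pigeonhole $|\supp(x)|$ in $\mathbb{N}$ to make it constant; next, invoke the $\Delta$-system lemma for infinite families of finite sets to arrange the supports as $R \cup E_x$ with common root $R = \{\rho_1 < \cdots < \rho_r\}$ and pairwise disjoint petals $E_x$ of some fixed size $e$; then pigeonhole the finitely many interleave patterns between $E_x$ and $R$, and the $2^{r+e}$ possible patterns recording, for each $\gamma \in \supp(x)$, whether $2x(\gamma) = 0$.

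The step I expect to be the main obstacle is producing an infinite $X'' \subseteq X'$ on which each $x(\rho_j)$ is independent of $x$. The key observation is that, since the ordered value sequence of $2x$ must be the constant tuple $(v_1, \ldots, v_n)$, each $2x(\rho_j)$ is forced to be a specific value of $\mathbb{G}$ independent of $x \in X'$; but the $2$-torsion subgroup $\mathbb{G}_2 = \{0, \frac{1}{2} + \mathbb{Z}\}$ of $\mathbb{G}$ has cardinality two, so each $x(\rho_j)$ ranges over an at-most-$2$-element coset, and $r$ iterated pigeonholes yield the desired $X''$ with $x(\rho_j) = r_j$ constant.

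The finale is a short counting argument. For distinct $x, y \in X''$, disjointness of $R, E_x, E_y$ gives $\supp(x + y) = \{\rho_j : 2r_j \ne 0\} \sqcup E_x \sqcup E_y$, hence $|\supp(x + y)| = a + 2e$ where $a = |\{j : 2r_j \ne 0\}|$. On the other hand, $|\supp(2x)| = a + |\{i : 2x(\beta_i(x)) \ne 0\}| \le a + e$, writing $\beta_i(x)$ for the $i$-th element of $E_x$. Since both quantities must equal $n$ by monochromaticity, $2e \le e$ and so $e = 0$. Then every $x \in X''$ has $\supp(x) = R$ with $x(\rho_j) = r_j$, making $X''$ a singleton and contradicting its being infinite.
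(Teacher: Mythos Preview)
Your argument is correct and follows essentially the same route as the paper: the same colouring by the ordered tuple of nonzero coordinates, the same pigeonhole and $\Delta$-system refinements, and the same support-counting contradiction at the end. One small step you skip: before you can ``pigeonhole $|\supp(x)|$ in $\mathbb N$'' you must know these sizes are bounded---this follows because for distinct $x,y\in X$ one has $|\supp(x)\bigtriangleup\supp(y)|\le|\supp(x+y)|=n$, so all support sizes lie within $n$ of any fixed $|\supp(y)|$.

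Your endgame is in fact slightly tidier than the paper's. The paper first pigeonholes the \emph{entire} value tuple $(y_1,\ldots,y_s)$ of $x$ (not just the root entries), takes the common support size $s$ minimal among all infinite refinements, and then uses that minimality to dispose of root coordinates equal to $\tfrac12$ before counting. You avoid the minimality trick by tracking $a=|\{j:2r_j\ne 0\}|$ directly, which lets the inequality $n=a+2e$ versus $n\le a+e$ do all the work.
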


\begin{proof}
Since the result is obvious for a (finite or infinite) countable $G$ (simply give each element of $G$ a different colour), it suffices to consider the case $G=\bigoplus_{\alpha<\kappa}\mathbb G$, where $\kappa$ is an uncountable cardinal (note that the relevant property is inherited to any subgroup $H\leq G$). Let $C$ be the set of finite sequences of nonzero elements of $\mathbb G$, and (noting that $C$ is countable) define the colouring $c:G\longrightarrow C$ by the formula
\begin{equation*}
c(g_\alpha\big|\alpha<\kappa)=(g_{\alpha_1},\ldots,g_{\alpha_m}),
\end{equation*}
where $\alpha_1,\ldots,\alpha_m$ are the elements of $\supp(g)$ enumerated increasingly. In other words, $c(g)$ is the (finite) sequence of nonzero entries of $g$.

Suppose that $X\subseteq G$ is an infinite set such that $X+X$ is $c$-monochromatic, say on colour $(x_1,\ldots,x_n)$. Given two distinct elements $g,h\in X$, we note that $\supp(g)\bigtriangleup\supp(h)\subseteq\supp(g+h)\subseteq\supp(g)\cup\supp(h)$, and so $|\supp(g)\bigtriangleup\supp(h)|\leq n$. Moreover, $|\supp(g)|\leq |\supp(h)|+|\supp(g)\setminus \supp(h)|\leq |\supp(h)|+|\supp(g)\bigtriangleup \supp(h)| \leq |\supp(h)|+n$. 
This implies that the set of natural numbers $\{|\supp(g)|\big|g\in X\}$ is bounded by $n+|\supp(h)|$ where $h\in X$ is any fixed element, therefore, by the pigeonhole principle, we may assume that all elements of $X$ have supports of some fixed cardinality $s$; suppose furthermore that $s$ is as small as possible. Note that, if $(y_1,\ldots,y_s)$ is the sequence of nonzero elements of some $g\in X$, then for each $i\leq s$ it must be the case that $2y_i$ equals either $0$, or some $x_j$ for $j\leq n$. By Lemma~\ref{lem:order4}, each equation $2y=x$ for a fixed $x$ has at most two solutions in $\mathbb T$ ($\mathbb T$ has exactly one element of order $2$), and hence there are only finitely many (at most $2^s$) possible choices for the sequence $(y_1,\ldots,y_s)$. So again by the pigeonhole principle we further assume that all elements of $X$ have the same sequence $(y_1,\ldots,y_s)$ of nonzero entries. In particular, no two distinct elements of $X$ can have the same support set, and so $\{\supp(g)\big|g\in X\}$ is an infinite family of sets of size $s$. Hence, by the $\Delta$-system lemma\footnote{Also known as  {\it the sunflower lemma}. For a proof, see~\cite[p. 421]{komjath-totik}.}~{\cite[p. 107, statement 1]{komjath-totik}}, we may assume also that the supports of elements of $X$ form a $\Delta$-system, say with root $R\subseteq\kappa$. Further applications of the pigeonhole principle allow us to assume that the sequence of $R$-entries $(g_{\alpha_1},\ldots,g_{\alpha_{r}})$ (where $r=|R|$ and $\alpha_1<\ldots<\alpha_r$ are the elements of $R$) is constant across $X$.

We claim that for each $\alpha\in R$, it must be the case that $\alpha\in\supp(g+h)$ for any two $g,h\in X$. If not, then we must have $g_\alpha+h_\alpha=0$, but since we are assuming $g_\alpha=h_\alpha$ this implies either $g_\alpha=h_\alpha=0$, a contradiction, or $g_\alpha=h_\alpha=\frac{1}{2}$. However, this allows us to ``ignore'' the $\alpha$-th entry:  for each $g\in X$, let $\tilde{g}$ denote the element with the same entries as $g$, except in the $\alpha$-th entry, where $\tilde{g}_\alpha=0$. Then the set $\tilde{X}=\{\tilde{g}\big|g\in X\}$ is another infinite set, with $\tilde{X}+\tilde{X}$ monochromatic (in the same colour as $X+X$) and all elements $\tilde{g}$ satisfying $|\supp(\tilde{g})|=s-1$, contradicting the minimality of $s$. Hence, this second case also leads to a contradiction, and we conclude that $2g_\alpha=2h_\alpha=g_\alpha+h_\alpha\neq 0$. Therefore, $R=\supp(g)\cap\supp(h)\subseteq\supp(g+h)$, which implies that $\supp(g+h)=\supp(g)\cup\supp(h)$. This itself is a contradiction, as it implies $n=|c(g+h)|=|\supp(g+h)|=r+2(s-r)$,  while simultaneously $n=|c(2g)|=|\supp(2g)|\leq|\supp(g)|=s$, since $\supp(2g)\subseteq\supp(g)$. The last two (in)equalities show that $2s-r\leq s$, implying $s\leq r$ and so any two elements of $X$ must have the same support, which we had already established as impossible.
\end{proof}

\section{Infinitely many colours, finite monochromatic sets}

This is the section containing the two main results of the paper, as these are the ones whose proof does not consist of routine computations. Unlike in Section 2, we were unable to exactly characterize the groups satisfying the statement $G\rightarrow(\kappa)_{\theta}^\plus$ (for infinite $\kappa$ and $\theta$); however, we have a sufficient condition for this to fail as well as an example where it holds.




\begin{theorem}\label{thm:no-order-4}
Let $G$ be a group satisfying one of the following properties:
\begin{enumerate}
    \item either $G$ is countable, or
    \item $G$ is a torsion group without elements of order $4$, or
    \item $G$ has no elements of order $2$.
\end{enumerate}
Then $G\nrightarrow(2)_{\aleph_0}^\plus$
\end{theorem}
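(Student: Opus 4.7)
The plan is to exhibit, in each of the three cases, an explicit countable colouring $c\colon G\longrightarrow\aleph_0$ such that no pair $\{x,y\}$ with $x\neq y$ produces a monochromatic $X+X=\{2x,2y,x+y\}$. Case (1) is virtually immediate: taking $c$ to be any injection of the countable $G$ into $\aleph_0$, a monochromatic sumset forces $2x=2y=x+y$, and the equality $2x=x+y$ yields $x=y$. The other two cases require more work and are handled in sequence: first I prove case (3), then deduce case (2) from it.

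For case (3), I adapt the support-based colouring from the proof of Theorem~\ref{thm:no-infinite}. Since $G$ has no elements of order $2$, its divisible hull is likewise $2$-torsion-free, hence decomposes as $\bigoplus_{\alpha<\kappa}G_\alpha$ with each $G_\alpha$ isomorphic to $\mathbb{Q}$ or $\mathbb{Z}[p^\infty]$ for some odd prime $p$. All of these embed into the fixed countable group $\mathbb{H}=\mathbb{Q}\oplus\bigoplus_{p\text{ odd prime}}\mathbb{Z}[p^\infty]$, which itself has no $2$-torsion. I colour each $g\in G$ by the finite sequence of its nonzero entries in the increasing order of a fixed well-ordering of the index set, viewed as an element of the countable codomain $\mathbb{H}^{<\omega}$. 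Supposing, for contradiction, that distinct $x,y$ give $c(2x)=c(2y)=c(x+y)$, and setting $a=x+y$ and $s=x-y\neq 0$, this reads $c(a)=c(a+s)=c(a-s)$. I induct on $|\supp(a)|$: the base case $a=0$ immediately forces $s=0$, since $c(a)=()$ then demands $c(s)=()$. For the inductive step, I compare $\gamma=\max\supp(a)$ with $\max\supp(s)$. If $\max\supp(s)>\gamma$, matching the last entries of $c(a+s)$ and $c(a-s)$ yields $s_\beta=-s_\beta$ at $\beta=\max\supp(s)$, contradicting the $2$-torsion-freeness of $\mathbb{H}$. If $\max\supp(s)=\gamma$, a short case analysis on whether $a_\gamma+s_\gamma$ or $a_\gamma-s_\gamma$ vanishes produces either $2s_\gamma=0$ or $a_\gamma=2a_\gamma$, both absurd. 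Finally, if $\max\supp(s)<\gamma$, the top coordinate contributes $a_\gamma$ as last entry to each of $c(a)$, $c(a+s)$, $c(a-s)$, so stripping it yields $a'=a-a_\gamma\mathbf{e}_\gamma$ with $|\supp(a')|=|\supp(a)|-1$ and $c(a')=c(a'+s)=c(a'-s)$, closing the induction.

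For case (2), the primary decomposition of torsion groups combined with the no-order-$4$ hypothesis yields $G=G[2]\oplus G_{\mathrm{odd}}$, where $G[2]=\{g\in G\colon 2g=0\}$ is Boolean (a short induction on the exponent, using absence of order $4$, shows that the $2$-primary part of $G$ equals $G[2]$) and $G_{\mathrm{odd}}=\bigoplus_{p\text{ odd prime}}G_p$ has no $2$-torsion. Case (3) then supplies a countable colouring $c_{\mathrm{odd}}\colon G_{\mathrm{odd}}\longrightarrow\aleph_0$ witnessing $G_{\mathrm{odd}}\nrightarrow(2)_{\aleph_0}^\plus$. Colour $G[2]$ by $c_2(0)=0$ and $c_2(w)=1$ for $w\neq 0$, and set $c(u+v)=(c_2(u),c_{\mathrm{odd}}(v))$ for $u\in G[2]$, $v\in G_{\mathrm{odd}}$. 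If distinct $x=u_1+v_1$ and $y=u_2+v_2$ produced a monochromatic sumset, the vanishing of the $G[2]$-component of $2x=2v_1$ and $2y=2v_2$ would force $c_2(u_1+u_2)=0$ and hence $u_1=u_2$, while the $c_{\mathrm{odd}}$-coordinates would read $c_{\mathrm{odd}}(2v_1)=c_{\mathrm{odd}}(2v_2)=c_{\mathrm{odd}}(v_1+v_2)$, which by case (3) applied to $G_{\mathrm{odd}}$ forces $v_1=v_2$; combining gives $x=y$, the desired contradiction.

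The main obstacle is the inductive argument in case (3): one must patiently verify that every configuration of the relative maxima of $\supp(a)$ and $\supp(s)$ either triggers a direct contradiction with the absence of $2$-torsion in $\mathbb{H}$ at the top coordinate, or allows that top coordinate to be stripped without disturbing the monochromatic equalities, so that the induction on $|\supp(a)|$ can descend cleanly to the base case.
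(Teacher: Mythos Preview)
Your proof is correct, but it is organized quite differently from the paper's.

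The paper uses a single embedding $G\hookrightarrow\bigoplus_{\alpha<\kappa}\mathbb G$ with $\mathbb G=\mathbb Q[\sqrt 2]/\mathbb Z$ (which does contain $2$-torsion) and a single colouring for both uncountable cases. It then performs a preliminary reduction to arrange that no coordinate of the witnessing pair has order~$4$ (in case~(2) this is immediate, in case~(3) one multiplies $g,h$ by a suitable $2^k$), and afterwards runs a transfinite induction on the coordinate $\alpha$ to show $g_\alpha=h_\alpha$ for all $\alpha$. By contrast, you tailor the ambient group in case~(3) so that it is already $2$-torsion-free (via the essentiality of $G$ in its divisible hull), which lets you skip any preprocessing; your substitution $a=x+y$, $s=x-y$ and the downward induction on $|\supp(a)|$, peeling off the top coordinate, is a clean alternative to the paper's upward coordinate induction. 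Your treatment of case~(2) is genuinely different: rather than arguing directly with the colouring, you invoke the primary decomposition to write $G=G[2]\oplus G_{\mathrm{odd}}$ and reduce to case~(3) via a product colouring, which is a pleasant structural shortcut. The trade-off is that the paper's argument is uniform across cases~(2) and~(3) with one colouring, whereas yours is more modular and makes the role of $2$-torsion more transparent.
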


\begin{proof}
The case where $G$ is countable is straightforward (simply give a different colour to each element of $G$, as we did in the proof of Theorem~\ref{thm:no-infinite}), so we begin by assuming that $G$ is uncountable, and thus a subgroup of $\bigoplus_{\alpha<\kappa}\mathbb G$ where $\kappa=|G|$. If $C$ is the (countable) set of finite sequences of nonzero elements of $\mathbb G$, we define $c:G\longrightarrow C$ by letting $c(g)$ be the (finite) sequence of nonzero entries of $g$.

Suppose there were two distinct elements $g=(g_\alpha\big|\alpha<\kappa),h=(h_\alpha\big|\alpha<\kappa)$ such that $\{g,h\}+\{g,h\}=\{2g,2h,g+h\}$ were $c$-monochromatic. We begin by showing that we can assume that none of the entries $g_\alpha,h_\alpha$ has order $4$. In case $G$ is a torsion group without elements of order $4$, simply note that, in case some $g_\alpha$ had order $4$, there would be some integer multiple of $g$ of order $4$, contradicting the hypothesis about $G$. If, on the other hand, $G$ (is not necessarily a torsion group, but it) lacks elements of order $2$, we argue as follows: suppose that some entries of $g$, or of $h$, have order a power of $2$, and let $k$ be sufficiently large that both $2^k g$ and $2^k h$ are still nonzero, but do not have any entry left with order a power of $2$ (such a $k$ exists because, by hypothesis, the orders of $g$ and $h$ cannot be powers of $2$, and they only have finitely many nonzero entries). If $2^k g=2^k h$, then either $g-h$ is zero, or an element whose order divides $2^k$; the assumption that $G$ lacks elements of order two makes the latter case impossible, and hence we must have $g=h$, a contradiction. Therefore $2^k g\neq 2^k h$ and the set $\{2(2^k g),2(2^k h), 2^k g+2^k h\}$ still is $c$-monochromatic.

So we may assume that there exist two distinct elements $g=(g_\alpha\big|\alpha<\kappa),h=(h_\alpha\big|\alpha<\kappa)$ such that none of the entries $g_\alpha,h_\alpha$ has order $4$, and furthermore the set $\{2g,2h,g+h\}$ is $c$-monochromatic, say on colour $(x_1,\ldots,x_n)$. We show, by induction on $\alpha$, that $g_\alpha=h_\alpha$ for all $\alpha<\kappa$, hence $g=h$, leading us to a contradiction.

Suppose, then, that $g_\xi=h_\xi$ for all $\xi<\alpha$. If $g_\alpha=h_\alpha=0$ we are done, so assume without loss of generality that $g_\alpha\neq 0$. If $h_\alpha=0$, then the value $g_\alpha$ is one of the non-zero entries of $g+h$, and so there is an $i\leq n$ such that $x_i=g_\alpha$. Since $c(g+h)=c(2g)$, the element $2g$ has the exact same sequence of non-zero entries as $g+h$, so there must be some $g_\beta$, with $\beta\geq\alpha$, such that $2g_\beta=x_i=g_\alpha$. This implies in particular that $o(g_\alpha)\neq 2$, since otherwise $g_\beta$ would have order $4$, a contradiction. Hence $2g_\alpha\neq 0$ and so $2g_\alpha$ is the $i$-th nonzero entry of both $2g$ and $g+h$, implying that $2g_\alpha=g_\alpha+h_\alpha$, and therefore $h_\alpha=g_\alpha\neq 0$.

From the previous paragraph, we can conclude that both $g_\alpha$ and $h_\alpha$ are nonzero. One possibility is that $g_\alpha=h_\alpha=\frac{1}{2}$, in which case we are done; so we may assume without loss of generality that $o(g_\alpha)\neq 2$. Then, again, this means that $2g_\alpha\neq 0$ and therefore $x_i=2g_\alpha=g_\alpha+h_\alpha$, implying that $g_\alpha=h_\alpha$. In all of the possible cases we have been able to conclude that $g_\alpha=h_\alpha$, and we are done.
\end{proof}

It is unclear to the authors if simply requiring that $G$ lacks elements of order $4$ (cf. question~\ref{q:order4}) is enough to ensure that $G\nrightarrow(2)_{\aleph_0}^\plus$. For example, within the group $(\mathbb Z/4\mathbb Z)\times(\mathbb Z/4\mathbb Z)\times\mathbb Z$, consider the subgroup $G$ generated by $\{(1,0,1),(2,0,0),(0,2,0)\}$, which contains elements of order $2$ but lacks elements of order $4$. The elements $g=(1,0,1)$ and $h=(3,2,1)$ both belong to $G$ and are such that the set $\{2g,2h,g+h\}$ is monochromatic for the colouring described in the proof of Theorem~\ref{thm:no-order-4}. However, $G$ is isomorphic to $\tilde{G}=\mathbb Z\times(\mathbb Z/2\mathbb Z)\times(\mathbb Z/2\mathbb Z)$, and $\tilde{G}$ does not contain two distinct elements $g,h$ such that $\{2g,2h,g+h\}$ is monochromatic for said colouring. So it is possible that, in order to establish a more general result, one might need to carefully choose the embedding of $G$ into $\bigoplus\mathbb G$.

To finish the paper, we exhibit an uncountable group with lots of elements of order $4$ for which the positive result follows. Recall that the cardinal numbers $\beth_\alpha(\theta)$ are defined recursively by $\beth_0(\theta)=\theta$, $\beth_{\alpha+1}(\theta)=2^{\beth_\alpha(\theta)}$, and $\beth_\alpha(\theta)=\sup\{\beth_\xi(\theta)\big|\xi<\alpha\}$ if $\alpha$ is a limit ordinal.

\begin{proposition}\label{prop:order-4}
Let $\theta$ be an arbitrary infinite cardinal. Let $\kappa=\beth_\omega(\theta)$, and consider the group $G=\bigoplus_{\alpha<\kappa}(\mathbb Z/4\mathbb Z)$. Then, for every finite $n$, we have $G\rightarrow(n)_{\theta}^\plus$.
\end{proposition}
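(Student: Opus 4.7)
The plan is to emulate the construction from the third case of the proof of Theorem~\ref{thm:sect2}, substituting Ramsey's theorem with the Erdős--Rado theorem in order to accommodate the $\theta$-many colours. The key feature of $G=\bigoplus_{\alpha<\kappa}(\mathbb Z/4\mathbb Z)$ that makes this work is that its standard generators $e_\alpha$ satisfy $4e_\alpha=0$ and $o(2e_\alpha)=2$, with the family $\{2e_\alpha:\alpha<\kappa\}$ being $\mathbb Z/2\mathbb Z$-linearly independent in $2G$. In other words, the $e_\alpha$ play exactly the role of the sequence $(z_n)$ of Lemma~\ref{lem:secuencia-orden-2-4}, and we can appeal to the same arithmetic identities.

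Given a colouring $c:G\longrightarrow\theta$ and a fixed finite $n$, I would introduce an auxiliary colouring $d:[\kappa]^{n}\longrightarrow\theta$ by
\begin{equation*}
d(\{\gamma_1,\ldots,\gamma_n\})=c(2e_{\gamma_1}+\cdots+2e_{\gamma_n}).
\end{equation*}
Since $\kappa=\beth_\omega(\theta)\geq \beth_{n-1}(\theta)^+$, the Erdős--Rado theorem $\beth_{n-1}(\theta)^+\longrightarrow(\theta^+)^n_\theta$ applies and yields a subset $Y\subseteq\kappa$ with $|Y|\geq\theta^+$ (in particular $|Y|\geq 2n$) on which $d$ is constant, say with value $t\in\theta$.

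Now I would pick $2n$ distinct indices $\alpha_1,\ldots,\alpha_n,\beta_1,\ldots,\beta_n\in Y$ and, for each $f:\{1,\ldots,n\}\longrightarrow\{0,1\}$, define
\begin{equation*}
x_f=\sum_{k=1}^n\varepsilon_{f(k)}e_{\alpha_k}+\sum_{k=1}^n\delta_{f(k)}e_{\beta_k},
\end{equation*}
with $\varepsilon_0=3,\varepsilon_1=1,\delta_0=0,\delta_1=2$ as in Lemma~\ref{lem:secuencia-orden-2-4}. A direct computation modulo $4$ (using $2\cdot 3\equiv 2$, $2\cdot 1\equiv 2$, $4\equiv 0$) gives $2x_f=\sum_{k=1}^n 2e_{\alpha_k}$ and, for $f\neq g$, $x_f+x_g=\sum_{k=1}^n 2e_{\gamma_k}$ where $\gamma_k=\alpha_k$ when $f(k)=g(k)$ and $\gamma_k=\beta_k$ otherwise; in each case the resulting index set is an $n$-subset of $Y$, so both $c(2x_f)$ and $c(x_f+x_g)$ equal $t$. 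Distinctness of the $x_f$ is immediate by looking at any coordinate $\beta_{k_0}$ where $f(k_0)\neq g(k_0)$, since the $\beta_{k_0}$-entries differ ($0$ vs.\ $2$). Since there are $2^n\geq n$ such elements, any $n$ of them form a set $X$ with $X+X$ monochromatic of colour $t$.

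The only delicate point is the application of Erdős--Rado: one must verify that the single cardinal $\beth_\omega(\theta)$ is large enough for the auxiliary colouring of $n$-tuples \emph{for every} finite $n$, which is precisely why the proposition is stated with $\kappa=\beth_\omega(\theta)$ rather than any smaller beth; the remainder of the argument is the straightforward $\mathbb Z/4\mathbb Z$-analogue of the bookkeeping already carried out in the last case of the proof of Theorem~\ref{thm:sect2}.
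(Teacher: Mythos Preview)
Your proposal is correct and follows essentially the same approach as the paper: define an auxiliary $\theta$-colouring of $[\kappa]^n$ via $c(2e_{\gamma_1}+\cdots+2e_{\gamma_n})$, apply Erd\H{o}s--Rado to extract a homogeneous set $Y$, and build $2^n$ elements $x_f$ from $2n$ indices in $Y$ using the $\varepsilon/\delta$ bookkeeping of Lemma~\ref{lem:secuencia-orden-2-4}. The only cosmetic differences are that the paper interleaves your $\alpha_k,\beta_k$ into a single increasing sequence $\alpha_1<\cdots<\alpha_{2n}$ (with odd positions playing the role of your $\alpha_k$ and even positions your $\beta_k$) and swaps the labels $\varepsilon_0\leftrightarrow\varepsilon_1$, $\delta_0\leftrightarrow\delta_1$; neither change affects the argument.
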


\begin{proof}
In order to fix notation, denote by $e_\alpha$ the element of $G$ whose $\alpha$-th entry equals $1$, with all other entries equal to $0$. Hence each element of $G$ is a finite $\mathbb Z$-linear combination of the $e_\alpha$. Fixing an $n\in\mathbb N$, we define $\varepsilon_0=1$, $\delta_0=2$, $\varepsilon_1=3$, $\delta_1=0$, and we proceed to define $2^n$ finite sequences of integers of length $2n$, as follows: for each $f:\{1,\ldots,n\}\longrightarrow\{0,1\}$ let
\begin{equation*}
\vec{s_f}=(\varepsilon_{f(1)},\delta_{f(1)},\ldots,\varepsilon_{f(n)},\delta_{f(n)}).
\end{equation*}
Given an arbitrary colouring $c:G\longrightarrow\theta$, we define another colouring $d:[\kappa]^n\longrightarrow\theta$ by
\begin{equation*}
d(\alpha_1,\ldots,\alpha_n)=c(2e_{\alpha_1}+\cdots+2e_{\alpha_n})
\end{equation*}
whenever $\alpha_1<\cdots<\alpha_n$. By the Erd\H{o}s--Rado theorem~\cite[Theorem 9.6]{jech}, there exists an infinite set $Y$ such that $[Y]^n$ is $d$-monochromatic, say on colour $k$. Pick $2n$ distinct elements $\alpha_1,\ldots,\alpha_{2n}\in Y$, with $\alpha_1<\cdots<\alpha_{2n}$, and for each $f:\{1,\ldots,n\}\longrightarrow\{0,1\}$ let $x_f=\vec{s_f}*\vec{\alpha}$, where $\vec{\alpha}=(\alpha_1,\ldots,\alpha_{2n})$. The reader will gladly verify that, if $f,g:\{1,\ldots,n\}\longrightarrow\{0,1\}$ are distinct, then
\begin{eqnarray*}
2x_f=2e_{\alpha_1}+2e_{\alpha_3}+\cdots+2e_{\alpha_{2n-1}}, \\
x_f+x_g=2e_{\alpha_{1+i_0}}+2e_{\alpha_{3+i_1}}+\cdots+2e_{\alpha_{2n-1+i_n}},
\end{eqnarray*}
where $i_j=|f(j)-g(j)|$, so that 
$$c(2x_f)=d(\alpha_1,\alpha_3,\ldots,\alpha_{2n-1})=k$$
and
$$c(x_f+x_g)=d(\alpha_{1+i_0},\alpha_{3+i_1},\ldots,\alpha_{2n-1+i_n})=k.$$
Hence if we let $X=\{x_f\big|f:\{1,\ldots,n\}\longrightarrow\{0,1\}\}$, then $|X|=2^n$ and $X+X$ is monochromatic in colour $k$. Since the values of $2^n$, as $n$ varies, can be arbitrarily large, the conclusion is that one can get such sets $X$ of any possible finite size, and we are done.
\end{proof}

Note, in particular, that for every colouring of the group from Proposition~\ref{prop:order-4} there are arbitrarily large (but finite) subsets $X$ such that $X+X$ is monochromatic. For a fixed size of the required monochromatic set, however, it is very likely that the group from the previous proposition is overkill in terms of size. For example, upon fixing $n$ and $\theta$, one only needs to take $\kappa=\beth_n(\theta)^+$ for the argument in the proof of Proposition~\ref{prop:order-4} to work and be able to obtain a monochromatic set $X+X$ with $|X|=2^n$ (the key point here being the application of the Erd\H{o}s--Rado theorem). Obtaining more precise information along this lines seems to be an interesting question that we leave open.

\begin{question}\label{q:order4}
    \footnote{{\it Added in print:} This question has now been answered, in the aﬃrmative, by I. Leader and K. Williams (arXiv:2407.03938).}Let $G$ be an uncountable group without elements of order $4$. Is it the case that $G\nrightarrow(2)_{\aleph_0}^\plus$?
\end{question}

\begin{question}
Is it possible to characterize (in terms of $|G|$, $|G_2|$, $|G_4|$ and/or possibly $|G/G_2|$, where $G_d=\{x\in G\big|dx=0\}$) precisely those Abelian groups $G$ satisfying (whether for all $n$, or for some specific one) $G\rightarrow(n)_\theta^\plus$?
\end{question}


\begin{thebibliography}{99}

\bibitem{hindman-sumsets-3colours}
N. Hindman,
{\em Partitions and sums of integers with repetition.}
J. Combin. Theor. Ser. A 
{\bf 27}~no.1 (1979), 19--32.

\bibitem{hindman-leader-strauss}
N. Hindman, I. Leader and D. Strauss,
{\em Pairwise sums in colourings of the reals.}
Abh. Math. Semin. Univ. Hambg.
{\bf 87} (2017), 275--287.

\bibitem{jech}
T. Jech,
{\em Set theory. The Third Millennium Edition, Revised and Expanded.}
Springer Monographs in Mathematics.
Springer-Verlag, Berlin, 2003.

\bibitem{6-authors-monochromatic-sumsets}
P. Komj\'ath, I. Leader, P. Russell, S. Shelah, D. Soukup, and Z. Vidny\'anszky,
{\em Infinite monochromatic sumsets for colourings of the reals.}
Proc. Amer. Math. Soc.
{\bf 147} (2019), 2673--2684.

\bibitem{komjath-totik}
P. Komj\'ath and V. Totik,
{\em Problems and Theorems in Classical Set Theory.}
New York, Springer, 2006.

\bibitem{leader-russell}
I. Leader and P. Russell,
{\em Monochromatic Infinite Sumsets.}
New York J. Math.
\textbf{26} (2020), 467--472.

\bibitem{owings-problem}
J. Owings,
{\em Problem E2494.}
Amer. Math. Monthly
{\bf 81} (1974), 902.

\bibitem{ramsey}
F.P. Ramsey,
{\em On a problem of formal logic.}
Proc. London Math. Soc.,
{\bf s2-30} (1930), 264--286.

\bibitem{rotman}
J. Rotman,
{\em The theory of groups.}
Allyn and Bacon Inc., Boston, Massachusetts, 1973.

\bibitem{zhang-sumsets}
J. Zhang,
{\em Monochromatic sumset without the use of large cardinals.}
Fund. Math.
{\bf 250} (2020), 243--252.

\end{thebibliography}
\end{document}